\documentclass[11 pt]{amsart}
\usepackage{amscd,amsfonts,amssymb,amsmath}
\usepackage{hyperref}
\usepackage{epsfig}
\newtheorem{theorem}{Theorem}[section]
\newtheorem{corollary}[theorem]{Corollary}
\newtheorem{lemma}[theorem]{Lemma}
\newtheorem{proposition}[theorem]{Proposition}
\theoremstyle{definition}

\newtheorem{question}[theorem]{Question}
\newtheorem{problem}[theorem]{Problem}

\newtheorem{remark}[theorem]{Remark}
\numberwithin{equation}{subsection}

\newcommand{\Alex}{\operatorname{Alex}}
\newcommand{\Aut}{\operatorname{Aut}}

\newcommand{\Conj}{\operatorname{Conj}}
\newcommand{\im}{\operatorname{Im}}
\newcommand{\Autcent}{\operatorname{Autcent}}
\newcommand{\Inn}{\operatorname{Inn}}

\newcommand{\R}{\operatorname{R}}
\newcommand{\Hom}{\operatorname{Hom}}
\newcommand{\GL}{\operatorname{GL}}
\newcommand{\Z}{\operatorname{Z}}

\newcommand{\id}{\mathrm{id}}

\setlength\oddsidemargin{.8mm}
\setlength\evensidemargin{.8mm}
\setlength\textheight{19.6cm}
\setlength\textwidth{16.5cm}

\begin{document}
\title{Automorphism groups of quandles arising from groups}
\author{Valeriy G. Bardakov}
\author{Pinka Dey}
\author{Mahender Singh}

\date{\today}
\address{Sobolev Institute of Mathematics and Novosibirsk State University, Novosibirsk 630090, Russia.}
\address{Laboratory of Quantum Topology, Chelyabinsk State University, Brat'ev Kashirinykh street 129, Chelyabinsk 454001, Russia.}
\address{Novosibirsk State Agrarian University, Dobrolyubova street, 160, Novosibirsk, 630039, Russia.}
\email{bardakov@math.nsc.ru}
\address{Indian Institute of Science Education and Research (IISER) Mohali, Sector 81,  S. A. S. Nagar, P. O. Manauli, Punjab 140306, India.}
\email{mahender@iisermohali.ac.in} 
\email{pinkadey@iisermohali.ac.in} 

\subjclass[2010]{Primary 20N02; Secondary 20B25, 57M27}
\keywords{Automorphism of quandle; central automorphism; connected quandle; knot quandle; two-point homogeneous quandle}

\begin{abstract}
Let $G$ be a group and $\varphi \in \Aut(G)$. Then the set $G$ equipped with the binary operation $a*b=\varphi(ab^{-1})b$ gives a quandle structure on $G$, denoted by $\Alex(G, \varphi)$, and called the generalised Alexander quandle of $G$ with respect to $\varphi$. When $G$ is an additive abelian group and $\varphi = -\id_G$, then $\Alex(G, \varphi)$ is the well-known Takasaki quandle of $G$. In this paper,  we determine the group of automorphisms and inner automorphisms of Takasaki quandles of abelian groups with no 2-torsion, and Alexander quandles of finite abelian groups with respect to fixed-point free automorphisms. As an application, we prove that if $G\cong  (\mathbb{Z}/p \mathbb{Z})^n$ and $\varphi$ is multiplication by a non-trivial unit of $\mathbb{Z}/p \mathbb{Z}$, then $\Aut\big(\Alex(G, \varphi)\big)$ acts doubly transitively on $\Alex(G, \varphi)$. This generalises a recent result of \cite{Ferman} for quandles of prime order.

\end{abstract}
\maketitle

\section{Introduction}
Quandles were introduced independently by Matveev \cite{Matveev} and Joyce \cite{Joyce}, where Matveev referred them as distributive groupoids. To each oriented diagram $D_K$ of a tame knot $K$ in $\mathbb{R}^3$, they associated a quandle called the knot quandle $Q (K)$, which is presented by generators corresponding to the connected components of $D_K$  and relations $a*b=c$ whenever the arc $b$ passes over the double point separating arcs $a$ and $c$.  From the defining axioms of a quandle (see Section \ref{section2}) it follows that $Q(K)$ does not depend on the diagram $D_K$ and is an invariant of $K$.  In fact, Matveev and Joyce proved  that  it is a complete invariant  up to weak equivalence. More precisely, if $K$ and $K'$ are two knots with $Q(K) \cong Q(K')$, then $(\mathbb{R}^3, K)$ is homeomorphic to $(\mathbb{R}^3, K')$ when we ignore orientations of $\mathbb{R}^3$ and the knots. Over the years, quandles and their analogues have been investigated to construct  more  computable invariants for knots and links. See for example \cite{Carter, Kamada, Nelson} and references therein.

Although knot quandles are strong invariants of knots, it is usually difficult to distinguish two quandles from their presentations just like knot groups. However, a comparison of homomorphisms from two quandles to simpler quandles can provide useful information. For example, Fox's $n$-colourings are representations of the knot quandle into the dihedral quandle of order $n$ \cite{Fox, Przytycki}. In this paper, we attempt to make a unified investigation of automorphism groups and connectivity of quandles arising from groups. At this point, our approach is purely algebraic and we make no reference to knots. However, it would be interesting to explore implications of these results in knot theory.

The paper is organised as follows. In Section \ref{section2}, we recall the definition of a quandle and give important examples. In Section \ref{section3}, we consider the problem of embedding of quandles into conjugation quandles of groups. In Section \ref{section4}, we determine the group of automorphisms and inner automorphisms of Takasaki quandles of abelian groups with no 2-torsion (Theorem \ref{main-theorem}). We also determine a subgroup of the automorphism group of conjugation quandle of a group $G$ (Proposition \ref{aut-conj}). In Section \ref{section5}, we investigate commutativity and connectivity of generalised Alexander quandles. In particular, we prove that if the generalised Alexander quandle of a group with respect to an involutory central automorphism is connected, then the group is necessarily abelian (Theorem \ref{connnected-abelian}). Finally, in Section \ref{section6}, we determine the group of automorphisms and inner automorphisms of Alexander quandles of finite abelian groups with respect to fixed-point free automorphisms (Theorem \ref{fixed-point-free-theorem}). As an application, in Theorem \ref{FNT-generalisation}, we prove that if $G\cong  (\mathbb{Z}/p \mathbb{Z})^n$ and $\varphi$ is multiplication by a non-trivial unit of $\mathbb{Z}/p \mathbb{Z}$, then $\Aut\big(\Alex(G, \varphi)\big)$ acts doubly transitively on $\Alex(G, \varphi)$. This generalises a recent result of Ferman, Nowik and Teicher \cite{Ferman} for quandles of prime order.
\bigskip

\section{Quandle: Definition and examples}\label{section2}
A {\it quandle} is a set $X$ with a binary operation $(a,b) \mapsto a * b$ satisfying the following conditions:
\begin{enumerate}
\item$a*a=a$ for all $a \in X$;
\item For any $a,b \in X$ there is a unique $c \in X$ such that $a=c*b$;
\item $(a*b)*c=(a*c) * (b*c)$ for all $a,b,c \in X$.
\end{enumerate}

Besides knot quandles associated to knots, many interesting examples of quandles come from groups. Throughout the paper, we write arbitrary groups multiplicatively and abelian groups additively.
\begin{enumerate}
\item If $G$ is a group and $n \geq 1$ an integer, then the set $G$ equipped with the binary operation $a*b= b^{-n} a b^n$ gives a quandle structure on $G$. For $n=1$, it is called the {\it conjugation quandle}, and is denoted by $\Conj(G)$.
\item If $A$ is an additive abelian group, then the set $A$ equipped with the binary operation $a*b= 2b-a$ gives a quandle structure on $A$, denoted by $T(A)$. Such quandles (earlier called {\it Keis}) were first studied by Takasaki \cite{Takasaki}. For $A= \mathbb{Z}/n \mathbb{Z}$,  it is called the {\it dihedral quandle}, and is denoted by $\R_n$.
\item Let $A$ be an additive abelian group and $t \in \Aut(G)$. Then the set $A$ equipped with the binary operation $a* b=ta+(\id_A-t)b$ is a quandle called the {\it Alexander quandle} of $A$ with respect to $t$.
\item Next we consider quandles which are of utmost importance for this paper, and appeared first in the work of Joyce \cite{Joyce}. Let $G$ be a group and $\varphi \in \Aut(G)$, then the set $G$ with binary operation $a*b=\varphi(ab^{-1})b$ gives a quandle structure on $G$, which we denote by $\Alex(G, \varphi)$. These quandles are referred in the literature (for example \cite{Clark1, Clark}) as {\it generalized Alexander quandles}.
\end{enumerate}

Note that, if $A$ is an additive abelian group and $\varphi =-\id_A$, then $a*b=2b-a$. Thus we have  $\Alex(A, \varphi)=T(A)$, the Takasaki quandle of $A$.
On the other hand, for arbitrary $\varphi$, we have $a*b=\varphi(a) + (\id_A-\varphi)(b)$. Thus, in this case,  $\Alex(A, \varphi)$ is the usual Alexander quandle of $A$ with respect to $\varphi$.
\bigskip

\section{Embedding of quandles into conjugation quandles}\label{section3}
Let $X$ be a quandle. For each $x \in X$, the map $S_x: X \to X$ defined by $S_x(y)= y * x$ defines an automorphism of the quandle $X$, called an {\it inner automorphism}.  The group generated by all inner automorphisms of $X$ is denoted by $\Inn(X)$. It is evident that $\Inn(X)$ is a normal subgroup of $\Aut(X)$.

A quandle $X$ is called {\it involutory} if $S_x  S_x = \id_X$ for all $x \in X$. We say that an element $x \in X$ is {\it invertible} if there exists a $y \in X$ such that $S_x  S_y=S_y  S_x = \id_X$. In this case, we write $y=x^{-1}$. Clearly, in an involutory quandle, every element is invertible with $x= x^{-1}$ for all $x \in X$. An automorphism $\varphi$ of a group $G$ is called {\it fixed-point free} if $\varphi(a)=a$ implies that $a=1$. We first consider the following question.

\begin{question}
For which quandles $X$ does there exists a group $G$ such that $X$ embeds in the conjugation quandle $\Conj(G)$?
\end{question}

For special kind of Alexander quandles, we have the following answer.

\begin{proposition}
Let $G$ be an additive abelian group and $\varphi \in \Aut(G)$ be a fixed-point free involution. Let $X=\Alex(G, \varphi)$ be the Alexander quandle of $G$  with respect to $\varphi$. Then there is an embedding of quandles $X \hookrightarrow \Conj \big(\Inn(X)\big)$.
\end{proposition}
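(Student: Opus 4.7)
The plan is to send each $x\in X$ to the inner automorphism $S_x\in\Inn(X)$ and show that the resulting map $\Phi:X\to\Conj(\Inn(X))$ defined by $\Phi(x)=S_x$ is an injective quandle homomorphism.

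The first step I would carry out is the simplification of the quandle structure. Because $\varphi$ is an involution of the abelian group $G$, for any $a\in G$ the element $a+\varphi(a)$ is fixed by $\varphi$ (since $\varphi(a+\varphi(a))=\varphi(a)+a$), so by the fixed-point free hypothesis it must equal $0$. Hence $\varphi=-\id_G$, and the same fixed-point free assumption applied to $2$-torsion elements ($\varphi(a)=-a=a$ forces $a=0$) shows that $G$ has no $2$-torsion. Consequently $X=\Alex(G,\varphi)=T(G)$, the Takasaki quandle, with operation $a*b=2b-a$ and inner automorphisms $S_x(y)=2x-y$. In particular $X$ is involutory, i.e.\ $S_x^2=\id_X$ for every $x$.

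Next I would verify that $\Phi$ is a quandle homomorphism into $\Conj(\Inn(X))$, whose operation is $f*g=g^{-1}fg$. Applying axiom (3) to an arbitrary element and comparing gives the identity $S_{b*c}=S_cS_bS_c^{-1}$, valid in any quandle. Combining this with $S_c^{-1}=S_c$ from involutivity yields $S_{a*b}=S_bS_aS_b=S_b^{-1}S_aS_b$, which is exactly $\Phi(a)*\Phi(b)$ in $\Conj(\Inn(X))$.

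Finally I would check injectivity: if $S_x=S_y$, then $2x-z=2y-z$ for every $z\in G$, whence $2(x-y)=0$, and the absence of $2$-torsion forces $x=y$. The main (and really the only) conceptual point is the opening reduction $\varphi=-\id_G$; once this identification is made the computation becomes a routine application of the quandle axioms together with the involutory property, so I do not expect a serious obstacle.
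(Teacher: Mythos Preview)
Your proof is correct, but it takes a genuinely different route from the paper's. The paper never observes that a fixed-point free involution on an abelian group must equal $-\id_G$; instead it works throughout with the general expression $a*b=\varphi(a)-\varphi(b)+b$, explicitly computes $S_{a_1*a_2}(b)$ and $S_{a_2}S_{a_1}S_{a_2}(b)$, and checks that both equal $\varphi(b-2a_2+a_1)+(2a_2-a_1)$. Injectivity is then deduced directly from the fixed-point free hypothesis (if $S_{a_1}=S_{a_2}$ then $a_1-a_2=\varphi(a_1-a_2)$). Your argument, by contrast, front-loads the structural insight $\varphi=-\id_G$ (and the consequent absence of $2$-torsion), after which the homomorphism property follows from the universal quandle identity $S_{a*b}=S_bS_aS_b^{-1}$ combined with involutivity, with no further computation needed. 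Your route is cleaner and more conceptual---indeed, once the reduction is made, the homomorphism step would work for any involutory quandle---while the paper's direct calculation has the minor advantage of keeping $\varphi$ visible in the formulas, which matches the surrounding discussion of general Alexander quandles.
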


\begin{proof}
Define $S:X \to \Conj \big(\Inn(X)\big)$ by $S(a)= S_a$ for all $a \in X$. First note that
$$ S_a^2(b) = S_a\big(\varphi(b)-\varphi(a)+a\big)= b.$$
Hence $S_a$ is involutory. Further, $S(a_1 * a_2)= S\big( \varphi(a_1) - \varphi(a_2) + a_2\big) = S_{\varphi(a_1) - \varphi(a_2) + a_2}$ and
$$S_{\varphi(a_1) - \varphi(a_2) + a_2}(b)=\varphi(b-2a_2+a_1)+(2a_2-a_1).$$
 On the other hand, $S(a_1) * S(a_2)=S_{a_1} * S_{a_2}= S_{a_2}^{-1} S_{a_1}S_{a_2}$. Now for each $b \in X$, we have
\begin{eqnarray*}
S_{a_2}^{-1} S_{a_1}S_{a_2} (b) = S_{a_2} S_{a_1}S_{a_2} (b)&=&S_{a_2} S_{a_1}\big(\varphi(b)-\varphi(a_2)+ a_2\big)\\
& = &  S_{a_2}\big((\varphi(b)-\varphi(a_2)+ a_2)*a_1\big)\\
& = & S_{a_2}\big(b-a_2+ \varphi(a_2) -\varphi(a_1)+a_1\big)\\
& = & \big(b-a_2+ \varphi(a_2) -\varphi(a_1)+a_1\big)*a_2\\
& = & \varphi(b-2a_2+a_1)+(2a_2-a_1).
\end{eqnarray*}
Hence $S$ is a quandle homomorphism. Further, $S(a_1)=S(a_2)$ together with the hypothesis that $\varphi$ is fixed-point free implies that $a_1=a_2$. Hence $S$ is an embedding of $X$ in  $\Conj \big(\Inn(X)\big)$.
\end{proof}
\bigskip

\section{Automorphisms of quandles}\label{section4}
In what follows, we give a description of the  automorphism group of Takasaki quandle of an abelian group with no 2-torsion. First, we prove the following slightly more general result.

\begin{proposition}\label{joyce-embedding}
Let $G$ be a group and $\varphi \in \Aut(G)$. Then there is an embedding $\Z(G) \rtimes C_{\Aut(G)}(\varphi)  \hookrightarrow \Aut\big(\Alex(G, \varphi)\big)$, where $C_{\Aut(G)}(\varphi)$ is the centraliser of $\varphi$ in $\Aut(G)$ and $\Z(G)$ is the center of $G$.
\end{proposition}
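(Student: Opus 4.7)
The plan is to construct two families of automorphisms of $\Alex(G,\varphi)$ coming from the two factors and then combine them into a single homomorphism.

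First, for each $z \in \Z(G)$, I would consider the left translation $L_z \colon G \to G$ given by $L_z(a) = za$. A direct check shows $L_z$ is a quandle endomorphism of $\Alex(G,\varphi)$: since $z$ is central, $zab^{-1}z^{-1} = ab^{-1}$, so
\begin{equation*}
L_z(a)*L_z(b) = \varphi(zab^{-1}z^{-1})zb = \varphi(ab^{-1})zb = z\varphi(ab^{-1})b = L_z(a*b),
\end{equation*}
where the second-to-last equality uses centrality of $z$ again. It is clearly a bijection, with inverse $L_{z^{-1}}$. Second, for each $\psi \in C_{\Aut(G)}(\varphi)$, the map $\psi$ itself is a quandle automorphism of $\Alex(G,\varphi)$, since $\psi \varphi = \varphi \psi$ gives
\begin{equation*}
\psi(a*b) = \psi\varphi(ab^{-1})\psi(b) = \varphi\bigl(\psi(a)\psi(b)^{-1}\bigr)\psi(b) = \psi(a)*\psi(b).
\end{equation*}

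Next, I would define $\Phi \colon \Z(G) \rtimes C_{\Aut(G)}(\varphi) \to \Aut(\Alex(G,\varphi))$ by $\Phi(z,\psi) = L_z \circ \psi$, where the semidirect product uses the natural action of $C_{\Aut(G)}(\varphi)$ on $\Z(G)$ by restriction (automorphisms preserve the center and commuting with $\varphi$ is preserved under restriction). To check $\Phi$ is a homomorphism, I would compute
\begin{equation*}
(L_{z_1}\psi_1)(L_{z_2}\psi_2)(a) = L_{z_1}\bigl(\psi_1(z_2)\psi_1\psi_2(a)\bigr) = z_1\psi_1(z_2)\,\psi_1\psi_2(a),
\end{equation*}
which matches $\Phi\bigl((z_1,\psi_1)(z_2,\psi_2)\bigr) = \Phi\bigl(z_1\psi_1(z_2),\,\psi_1\psi_2\bigr)$.

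Finally, injectivity is immediate: if $L_z \circ \psi = \id_G$, then evaluating at $1$ yields $z = 1$, and then $\psi = \id_G$. There is no real obstacle here; the only subtlety is picking the correct convention for the semidirect product (so that the composition order matches the group law) and verifying that $L_z$ is indeed a quandle morphism, which crucially uses both that $z$ centralizes $G$ on the right inside the argument of $\varphi$ and on the left outside it. Everything else is bookkeeping.
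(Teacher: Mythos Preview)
Your proof is correct and follows essentially the same approach as the paper: the paper uses right translations $t_a(b)=ba$ for $a\in\Z(G)$ in place of your left translations $L_z$, but since $z$ is central these are the same map, and the verification of the homomorphism and injectivity of $\Phi$ is identical.
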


\begin{proof}
Let $f \in C_{\Aut(G)}(\varphi)$. Then for $a, b \in G$, we have
\begin{eqnarray*}
f(a*b) &=& f\big( \varphi(a)\varphi(b)^{-1}b\big)\\
& = & f\big(\varphi(a)\big) f\big(\varphi(b)\big)^{-1}f(b)\\
& = & \varphi\big(f(a)\big) \varphi\big(f(b)\big)^{-1}f(b)\\
& = & f(a)*f(b).
\end{eqnarray*}
 Hence $C_{\Aut(G)}(\varphi) \leq  \Aut\big(\Alex(G, \varphi)\big)$.

Now, for $a \in \Z(G)$, let $t_a: G \to G$ be given by $t_a(b)=ba$ for all $b \in G$. Then $t_a \in \Aut\big(\Alex(G, \varphi)\big)$ and the map $a \mapsto t_a$ gives an embedding $\Z(G) \hookrightarrow \Aut\big(\Alex(G, \varphi)\big)$.

Finally, we show that the map $\Phi:\Z(G) \rtimes C_{\Aut(G)}(\varphi) \to \Aut\big(\Alex(G, \varphi)\big)$ given by $\Phi(a,f) = t_a  f$ is an embedding. The map is clearly injective. Further, for $(a_1,f_1), (a_2, f_2) \in  \Z(G) \rtimes C_{\Aut(G)}(\varphi)$, we have
\begin{eqnarray*}
\Phi\big((a_1,f_1)(a_2, f_2) \big) & = & \Phi\big((a_1 f_1(a_2), f_1  f_2) \big)\\
& = & t_{a_1 f_1(a_2)}  (f_1  f_2)\\
& = & (t_{a_1}  f_1)  (t_{a_2}  f_2)\\
& = & \Phi\big((a_1,f_1)\big)  \Phi\big((a_2, f_2) \big).
\end{eqnarray*}
This completes the proof of the proposition.
\end{proof}

Now, take $G$ to be an additive abelian group and $\varphi=-\id_G$. In this case $\Alex(G, \varphi)=T(G)$.

\begin{theorem}\label{main-theorem}
Let $G$ be an additive abelian group with no 2-torsion. Then the following hold:
\begin{enumerate}
\item $\Aut\big(T(G)\big) \cong G \rtimes \Aut(G)$.
\item $\Inn\big(T(G)\big) \cong 2G \rtimes \mathbb{Z}/2\mathbb{Z}$.
\end{enumerate}
\end{theorem}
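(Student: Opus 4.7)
The plan is to leverage Proposition \ref{joyce-embedding} with $\varphi=-\id_G$: since $-\id_G$ is central in $\Aut(G)$ we have $C_{\Aut(G)}(\varphi)=\Aut(G)$, and $\Z(G)=G$ since $G$ is abelian, so Proposition \ref{joyce-embedding} already supplies an embedding $\Phi\colon G\rtimes\Aut(G)\hookrightarrow\Aut(T(G))$. The remaining work for (1) is to prove surjectivity; for (2) I would identify each $S_y$ with an explicit element in the image of $\Phi$ and read off the generated subgroup.

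For (1), given $f\in\Aut(T(G))$, set $a:=f(0)$ and let $g:=t_{-a}\circ f\in\Aut(T(G))$, so that $g(0)=0$ and $f=t_a\circ g$. It suffices to show that $g$ is additive, for then $g\in\Aut(G)$ and $f=\Phi(a,g)$. Specialising $g(x*y)=g(x)*g(y)$ to $y=0$ and to $x=0$ yields $g(-x)=-g(x)$ and $g(2y)=2g(y)$, and the general identity reads $g(2y-x)=2g(y)-g(x)$. The main obstacle is that these identities directly give additivity only on the subgroup $2G$, which need not equal $G$ (for instance $G=\mathbb{Z}$). The way around it is to compute, for arbitrary $x,y\in G$,
\[
g(2x+2y)=g\bigl(2y-(-2x)\bigr)=2g(y)-g(-2x)=2g(x)+2g(y),
\]
while also $g(2x+2y)=g\bigl(2(x+y)\bigr)=2g(x+y)$. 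Equating gives $2\bigl(g(x+y)-g(x)-g(y)\bigr)=0$, and the no $2$-torsion hypothesis then forces $g(x+y)=g(x)+g(y)$, as required.

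For (2), I observe that $S_y(x)=2y-x=t_{2y}\bigl((-\id_G)(x)\bigr)$, so $S_y=\Phi(2y,-\id_G)$. A direct calculation gives $S_y\circ S_{y'}=t_{2(y-y')}=\Phi\bigl(2(y-y'),\id_G\bigr)$, so two-letter products of the $S_y$ already realise every element of $2G\times\{\id_G\}$, and multiplying by one further $S_w$ realises every element of $2G\times\{-\id_G\}$. Hence $\Inn(T(G))$ corresponds under $\Phi$ to the subgroup $2G\rtimes\langle -\id_G\rangle$ of $G\rtimes\Aut(G)$. Since $G$ has no $2$-torsion, $-\id_G$ acts on the nontrivial group $2G$ by the (non-identity) negation map, so $\langle -\id_G\rangle\cong\mathbb{Z}/2\mathbb{Z}$ and the semidirect product is the stated $2G\rtimes\mathbb{Z}/2\mathbb{Z}$.
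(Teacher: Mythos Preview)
Your proof is correct and follows essentially the same approach as the paper's: both use Proposition~\ref{joyce-embedding} for the embedding, then for surjectivity reduce to showing that a quandle automorphism fixing $0$ is additive by deriving the identity $2g(x+y)=2g(x)+2g(y)$ from the relation $g(2y-x)=2g(y)-g(x)$ and invoking the no $2$-torsion hypothesis. Your derivation (computing $g(2x+2y)$ two ways) is a minor variant of the paper's (writing $f(a+b)$ two ways via $2a-(a-b)$ and $2b-(b-a)$ and summing), and your treatment of (2) is actually a bit more explicit than the paper's in verifying that the $S_y$ generate all of $2G\rtimes\langle -\id_G\rangle$.
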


\begin{proof}
We already proved in Proposition \ref{joyce-embedding} that there is an embedding $\Phi:G \rtimes \Aut(G)  \hookrightarrow \Aut\big(T(G)\big)$.  Let $f \in \Aut\big(T(G)\big)$. Then $f(a*b)=f(a)* f(b)$ implies that $f(2b-a)=2f(b)-f(a)$ for all $a, b \in T(G)$. Taking $a=2b$, we have  $f(2b)=2f(b)-f(0)$ for all $b \in G$, where $0$ is the identity of the group $G$.

Define $h: G \to G$ by $h(a)=f(a)-f(0)$ for all $a \in G$. We claim that $h \in \Aut(G)$. Since $f$ is a bijection, it follows that $h$ is also a bijection.  To show that $h$ is a group homomorphism, it suffices to show that $f(a+b)=f(a)+f(b)-f(0)$ for all $a, b \in G$. We can write
$$f(a+b)=f\big(2a-(a-b)\big)=2f(a)-f(a-b)$$
and
$$f(a+b)=f\big(2b-(b-a)\big)=2f(b)-f(b-a).$$
 Adding the two identities yields $2 f(a+b)= 2f(a)+ 2f(b)-f(a-b)-f(b-a)$. But, $f(a-b)=f\big(0-(b-a)\big)= 2f(0)-f(b-a)$. This gives $2 f(a+b)= 2f(a)+ 2f(b)- 2 f(0)$. Since $G$ has no 2-torsion, our claim follows. By definition of $h$, we can write $f=t_{f(0)}  h \in \Phi \big(G \rtimes \Aut(G) \big)$, and the proof of (1) is complete.

Let $r \in \Aut(G)$ be the automorphism given by $r(a) = -a$ for all $a \in G$. Then $\langle r \rangle \cong \mathbb{Z}/2\mathbb{Z}$. Note that $\Phi(2a, r) = t_{2a}  r=S_a$, and hence $\Phi$ embeds $2G \rtimes \mathbb{Z}/2\mathbb{Z}$ into $\Inn\big(T(G)\big)$. Conversely, any $S_a \in \Inn\big(T(G)\big)$ can be written as $S_a= t_{2a}  r$, and the proof of (2) is complete.
\end{proof}

As a consequence, we obtain the following.

\begin{corollary}
$\Aut\big( T(\mathbb{Z}^n)\big) \cong \mathbb{Z}^n \rtimes \GL(n, \mathbb{Z})$ and  $\Inn\big( T(\mathbb{Z}^n)\big) \cong (2\mathbb{Z})^n \rtimes \mathbb{Z}/2 \mathbb{Z}$ for all $n \geq 1$.
\end{corollary}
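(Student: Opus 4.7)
The plan is to derive the corollary as an immediate specialisation of Theorem \ref{main-theorem} to the group $G = \mathbb{Z}^n$. The main observation is that $\mathbb{Z}^n$, being free abelian, has no 2-torsion whatsoever (if $2a = 0$ for $a = (a_1, \dots, a_n) \in \mathbb{Z}^n$, then each $2a_i = 0$ in $\mathbb{Z}$, forcing $a_i = 0$). Therefore the hypothesis of Theorem \ref{main-theorem} is satisfied and both conclusions of that theorem apply to $T(\mathbb{Z}^n)$.

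For part (1), Theorem \ref{main-theorem}(1) gives $\Aut\big(T(\mathbb{Z}^n)\big) \cong \mathbb{Z}^n \rtimes \Aut(\mathbb{Z}^n)$. I would then invoke the standard identification $\Aut(\mathbb{Z}^n) \cong \GL(n,\mathbb{Z})$, which arises by sending a group automorphism to its matrix with respect to the standard basis; this is a bijection onto the set of integer matrices with integer inverses, i.e.\ those of determinant $\pm 1$. Substituting yields the first isomorphism.

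For part (2), Theorem \ref{main-theorem}(2) gives $\Inn\big(T(\mathbb{Z}^n)\big) \cong 2\mathbb{Z}^n \rtimes \mathbb{Z}/2\mathbb{Z}$. Since doubling acts coordinatewise, $2\mathbb{Z}^n = (2\mathbb{Z})^n$ as subgroups of $\mathbb{Z}^n$, which gives the stated form.

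There is no real obstacle here; the content is entirely in Theorem \ref{main-theorem}, and what remains is only the translation $\Aut(\mathbb{Z}^n) \cong \GL(n,\mathbb{Z})$ and the identification $2\mathbb{Z}^n = (2\mathbb{Z})^n$. The only point worth double-checking is that the semidirect product action on $\mathbb{Z}^n$ coming from Proposition \ref{joyce-embedding} (namely, $\Aut(\mathbb{Z}^n)$ acting on $\mathbb{Z}^n$ in the natural way via $t_a \mapsto t_{f(a)}$) matches the natural action of $\GL(n,\mathbb{Z})$ on $\mathbb{Z}^n$ under the identification above, which is immediate from the construction.
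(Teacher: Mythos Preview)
Your proposal is correct and matches the paper's approach exactly: the paper states this corollary with no proof, simply prefacing it with ``As a consequence, we obtain the following,'' so the intended argument is precisely the specialisation of Theorem~\ref{main-theorem} to $G=\mathbb{Z}^n$ together with the standard identification $\Aut(\mathbb{Z}^n)\cong\GL(n,\mathbb{Z})$ that you spell out.
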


Recall that $T(\mathbb{Z}/n\mathbb{Z})=\R_n$, the dihedral quandle of cardinality $n$. We also retrieve the following result of Elhamdadi, Macquarrie and Restrepo \cite[Theorem 2.1 and 2.2]{Elhamdadi} for dihedral quandles of odd cardinalities.

\begin{corollary}
$\Aut( \R_n) \cong \mathbb{Z}/n\mathbb{Z} \rtimes (\mathbb{Z}/n\mathbb{Z})^{\times}$ and  $\Inn( \R_n) \cong \mathbb{Z}/n\mathbb{Z} \rtimes \mathbb{Z}/2 \mathbb{Z}$ for odd $n$, where $(\mathbb{Z}/n\mathbb{Z})^{\times}$ is the group of units of $\mathbb{Z}/n\mathbb{Z}$.
\end{corollary}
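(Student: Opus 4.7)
The plan is to derive this corollary as an immediate specialisation of Theorem \ref{main-theorem}. Taking $G = \mathbb{Z}/n\mathbb{Z}$, I first observe that $G$ has no $2$-torsion precisely when $n$ is odd: if $2x \equiv 0 \pmod n$ with $n$ odd, then since $\gcd(2,n)=1$ we conclude $x \equiv 0$. Thus the hypotheses of Theorem \ref{main-theorem} are met, and recalling the identification $T(\mathbb{Z}/n\mathbb{Z}) = \R_n$, part (1) of that theorem gives
\[
\Aut(\R_n) \cong \mathbb{Z}/n\mathbb{Z} \rtimes \Aut(\mathbb{Z}/n\mathbb{Z}),
\]
while part (2) gives
\[
\Inn(\R_n) \cong 2(\mathbb{Z}/n\mathbb{Z}) \rtimes \mathbb{Z}/2\mathbb{Z}.
\]

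To obtain the stated forms, I would invoke two elementary facts. First, the standard isomorphism $\Aut(\mathbb{Z}/n\mathbb{Z}) \cong (\mathbb{Z}/n\mathbb{Z})^{\times}$ sending an automorphism to the unit by which it multiplies $1$; substituting this into the first displayed formula yields the asserted description of $\Aut(\R_n)$. Second, for odd $n$ the element $2$ is a unit in $\mathbb{Z}/n\mathbb{Z}$, so multiplication by $2$ is a bijection on $\mathbb{Z}/n\mathbb{Z}$ and hence $2(\mathbb{Z}/n\mathbb{Z}) = \mathbb{Z}/n\mathbb{Z}$. Substituting this into the second displayed formula gives the asserted description of $\Inn(\R_n)$.

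There is no real obstacle here: the entire content has already been packaged into Theorem \ref{main-theorem}, and what remains are two routine identifications specific to $\mathbb{Z}/n\mathbb{Z}$ with $n$ odd. The only point worth flagging explicitly is that the action of the semidirect product on the right factor is the natural one, which is preserved under both identifications, so the semidirect-product structure passes across without modification.
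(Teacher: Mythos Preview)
Your proposal is correct and follows exactly the approach the paper intends: the corollary is stated without proof as an immediate specialisation of Theorem \ref{main-theorem} to $G=\mathbb{Z}/n\mathbb{Z}$ with $n$ odd, and you have simply spelled out the two routine identifications $\Aut(\mathbb{Z}/n\mathbb{Z})\cong(\mathbb{Z}/n\mathbb{Z})^{\times}$ and $2(\mathbb{Z}/n\mathbb{Z})=\mathbb{Z}/n\mathbb{Z}$ that the paper leaves implicit.
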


\begin{remark}
Hou proved a general result \cite[Proposition 2.1]{Hou} analogous to Theorem \ref{main-theorem} for Alexander quandles of groups under the condition that $G= \{\varphi(a)^{-1}a~|~a \in G \}$. This set is the so called {\it $\varphi$-twisted conjugacy class} of the identity element and is of independent interest. See \cite{Bardakov} for some related results. When $G$ is a finite group, this condition is equivalent to $\varphi$ being fixed-point free. In particular, when $G$ is a  finite additive abelian group and $\varphi = -\id_G$, then this condition is equivalent to $G$ being without 2-torsion. However, when $G$ is infinite, then the two conditions are entirely different. For example, $G=\mathbb{Q}/\mathbb{Z}$ satisfy $G=2G$, but has 2-torsion. On the other hand, $G=\mathbb{Z}$ is without 2-torsion, but does not satisfy $G=2G$. Thus, our result, though restricted to Takasaki quandles of abelian groups, can be considered as complementary to that of Hou's result.
\end{remark}

\begin{remark}
We later prove a stronger version of Theorem \ref{main-theorem} for finite abelian groups in Theorem \ref{fixed-point-free-theorem}.
\end{remark}

Next, we investigate the automorphism group of the conjugation quandle $\Conj(G)$ of a group $G$. First, note that 
$$\Inn \big(\Conj(G)\big) \cong \Inn(G) \cong G/\Z(G)$$ for any group $G$. Our main observation is the following.

\begin{proposition}\label{aut-conj}
Let $G$ be a group with center $\Z(G)$. Then there is an embedding of groups $\Z(G) \rtimes \Aut(G) \hookrightarrow \Aut\big(\Conj(G)\big)$.
\end{proposition}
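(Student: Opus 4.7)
The plan is to imitate the proof of Proposition \ref{joyce-embedding} almost verbatim, with the conjugation quandle operation $a*b = b^{-1}ab$ in place of the generalised Alexander operation. First, I would observe that any $f \in \Aut(G)$ automatically preserves conjugation, since $f(b^{-1}ab) = f(b)^{-1}f(a)f(b)$, so $\Aut(G) \leq \Aut\bigl(\Conj(G)\bigr)$.

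Next, for each $a \in \Z(G)$, I would define the right-translation $t_a : G \to G$ by $t_a(b) = ba$, and verify it is an automorphism of $\Conj(G)$. The crucial computation is
\begin{eqnarray*}
t_a(b*c) &=& c^{-1}bca, \\
t_a(b)*t_a(c) &=& (ca)^{-1}(ba)(ca) \;=\; a^{-1}c^{-1}baca,
\end{eqnarray*}
which coincide precisely because $a$ is central. The map $a \mapsto t_a$ is then easily seen to be an injective group homomorphism from $\Z(G)$ into $\Aut\bigl(\Conj(G)\bigr)$.

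Then, using that $\Aut(G)$ preserves $\Z(G)$, I would define $\Phi : \Z(G) \rtimes \Aut(G) \to \Aut\bigl(\Conj(G)\bigr)$ by $\Phi(a, f) = t_a \circ f$, and check the homomorphism identity by evaluating both sides at an arbitrary $b \in G$. One computes
\begin{eqnarray*}
(t_{a_1}f_1)(t_{a_2}f_2)(b) &=& f_1f_2(b)\cdot f_1(a_2)\cdot a_1, \\
\Phi\bigl((a_1,f_1)(a_2,f_2)\bigr)(b) &=& t_{a_1 f_1(a_2)}\bigl(f_1f_2(b)\bigr) \;=\; f_1f_2(b)\cdot a_1 f_1(a_2),
\end{eqnarray*}
and these agree because $a_1$ and $f_1(a_2)$ both lie in $\Z(G)$ and hence commute.

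Finally, injectivity of $\Phi$ is obtained by evaluating at the identity: $\Phi(a,f)(1) = t_a(f(1)) = a$, which pins down $a$, after which $f = t_a^{-1} \circ \Phi(a,f)$ is also determined. The only delicate point is the centrality hypothesis on $a$ in the verification that $t_a$ preserves $*$; without it, right-translation fails to be a quandle morphism, and this is where the center $\Z(G)$ (as opposed to all of $G$) enters the statement.
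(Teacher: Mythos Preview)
Your proposal is correct and follows essentially the same route as the paper's proof: define the translations $t_a$ for $a\in\Z(G)$, note that $\Aut(G)\subseteq\Aut\bigl(\Conj(G)\bigr)$, and show that $\Phi(a,f)=t_a\circ f$ is an injective homomorphism. You supply a bit more computational detail (the explicit check that centrality is exactly what makes $t_a$ a quandle map, and the evaluation at $1$ for injectivity), but the structure and ideas are identical to the paper's argument.
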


\begin{proof}
For each $a \in \Z(G)$, let $t_a: G \to G$ be given by $t_a(b)=ba$ for all $b \in G$. Then $t_a \in \Aut\big(\Conj(G))$ and the map $a \mapsto t_a$ gives an embedding $G \hookrightarrow \Aut\big(\Conj(G)\big)$.  Obviously, $\Aut(G)\subseteq  \Aut\big(\Conj(G)\big)$.

It remains to show that the map $\Phi: \Z(G) \rtimes \Aut(G) \to \Aut\big(\Conj(G)\big)$ given by $\Phi(a,f) = t_a  f$ is an embedding. The map is clearly injective. Further, for $(a_1,f_1), (a_2, f_2) \in  \Z(G) \rtimes \Aut(A)$, we have
\begin{eqnarray*}
\Phi\big((a_1,f_1)(a_2, f_2) \big) & = & \Phi\big((a_1f_1(a_2), f_1  f_2) \big)\\
& = & t_{a_1f_1(a_2)}  (f_1  f_2)\\
& = & (t_{a_1}  f_1)  (t_{a_2}  f_2)\\
& = & \Phi\big((a_1,f_1)\big)  \Phi\big((a_2, f_2) \big).
\end{eqnarray*}
This completes the proof of the proposition.
\end{proof}

It is known that $\Aut\big(\Conj(\Sigma_3)\big) \cong \Inn\big(\Conj(\Sigma_3)\big) \cong \Sigma_3$ (see, for example, \cite{Elhamdadi}). Also, it is well-known that if $n \ge 3$ and $n \neq 6$, then $\Aut\big(\Sigma_n \big) \cong \Inn\big(\Sigma_n  \big) \cong \Sigma_n$. In view of Proposition \ref{aut-conj} and the preceding discussion, the following problem seems interesting.

\begin{problem}
Classify groups $G$ for which $\Aut\big(\Conj(G)\big) \cong \Z(G) \rtimes \Aut(G)$.
\end{problem}

We suspect that symmetric groups have such automorphism groups.
\bigskip

\section{Commutativity and connectivity of Alexander quandles}\label{section5}
Let $G$ be a group acting on a set $X$ from left. For each $x \in X$, the subgroup $G_x= \{g \in G~|~gx=x \}$ is called the { \it isotropy subgroup} at $x$. For each $1 \leq k \leq |X|$, we say that $G$ acts {\it $k$-transitively} on $X$ if for each pair of $k$-tuples $(x_1,\dots,x_k)$ and $(y_1,\dots,y_k)$ of distinct elements of $X$, there exists $g \in G$ such that $gx_i = y_i$ for each $1 \leq i \leq k$. A 1-transitive action is simply called {\it transitive} and a 2-transitive action is also called {\it doubly transitive}.

Let $X$ be a quandle and $\Inn(X)$ be its inner automorphism group. Then $\Inn(X)$ acts on $X$ by $(f, x) \mapsto f(x)$. For $k\geq 1$, the quandle $X$ is called {\it $k$-transitive quandle} if $\Inn(X)$ acts $k$-transitively on $X$. A 1-transitive quandle is also called {\it connected quandle}. The term was coined by Joyce, who also proved that knot quandles are connected \cite[Corollary 15.3]{Joyce}. Similarly, a 2-transitive quandle is also called {\it two-point homogeneous quandle}. Two-point homogeneous quandles are of fundamental importance, and a complete classification of finite two-point homogeneous quandles was obtained very recently \cite{Vendramin, Wada}.

A quandle $X$ is called {\it commutative} if $x * y = y * x$ for all $x,y \in X$. It must be noted that, this definition of commutativity is different from that of Joyce \cite{Joyce} and Neumann \cite{Neumann}.

Recall that, given a group $G$ and $\varphi \in \Aut(G)$, the generalised Alexander quandle $\Alex(G, \varphi)$ of $G$ with respect to $\varphi$ is the set $G$ with binary operation $a*b=\varphi(ab^{-1})b$ for $a, b \in G$.

\begin{proposition}
Let $G$ be a group and $\varphi \in \Aut(G)$. If $\Alex(G, \varphi)$ is commutative, then $\varphi(a^2)=a$ for all $a \in G$. Further, the converse holds if and only if $G$ is abelian.
\end{proposition}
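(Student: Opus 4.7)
The plan is to handle the first implication by a direct specialization, and then to split the converse into the abelian case (where the implication is proved by a short calculation) and the non-abelian case (where one must show that the hypothesis $\varphi(a^2)=a$ is strong enough to force $G$ to be abelian).

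For the forward direction, I would specialize the identity $a*b=b*a$ at $b=1$. Using the definition $a*b=\varphi(ab^{-1})b$, this yields
\[
\varphi(a) \;=\; a*1 \;=\; 1*a \;=\; \varphi(a^{-1})\, a \;=\; \varphi(a)^{-1}\, a,
\]
so $\varphi(a)^2 = a$. Since $\varphi$ is a homomorphism, $\varphi(a^2)=\varphi(a)^2=a$ for every $a\in G$.

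For the converse when $G$ is abelian, I would switch to additive notation. The hypothesis $\varphi(a^2)=a$ becomes $2\varphi(a)=a$, and the quandle operation is $a*b=\varphi(a)-\varphi(b)+b$. A direct computation gives
\[
(a*b)-(b*a) \;=\; \bigl(\varphi(a)-\varphi(b)+b\bigr)-\bigl(\varphi(b)-\varphi(a)+a\bigr) \;=\; 2\varphi(a)-2\varphi(b)+b-a \;=\; a-b+b-a \;=\; 0,
\]
so the quandle is commutative.

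The main obstacle, and the point that I expect to carry the force of the statement, is the ``only if'' part of the converse: one must show that the hypothesis $\varphi(a^2)=a$ already forces $G$ to be abelian, so that the converse is non-vacuous precisely in the abelian setting. I would argue as follows. From $\varphi(a^2)=a$ we have $\varphi(a)^2=a$ for all $a\in G$. Applying this with $a$ replaced by $xy$ gives $\varphi(xy)^2=xy$; since $\varphi$ is a homomorphism,
\[
\varphi(x)\varphi(y)\varphi(x)\varphi(y) \;=\; xy \;=\; \varphi(x)^2 \varphi(y)^2 \;=\; \varphi(x)\varphi(x)\varphi(y)\varphi(y).
\]
Cancelling $\varphi(x)$ on the left and $\varphi(y)$ on the right yields $\varphi(y)\varphi(x)=\varphi(x)\varphi(y)$, and surjectivity of $\varphi$ forces $G$ abelian. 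Combined with the abelian case above, this shows that the converse implication is a meaningful (equivalently, satisfiable) statement exactly when $G$ is abelian, which is the asserted equivalence.
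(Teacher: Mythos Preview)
Your forward direction is identical to the paper's. For the converse the paper proceeds differently and more directly: under the hypothesis $\varphi(a^2)=a$ one has $\varphi(b)^{-1}b=\varphi(b)$, so $a*b=\varphi(a)\varphi(b)^{-1}b=\varphi(a)\varphi(b)=\varphi(ab)$; hence $a*b=b*a$ is equivalent to $\varphi(ab)=\varphi(ba)$, i.e.\ to $ab=ba$, and the equivalence with $G$ abelian follows in one line. Your route instead splits into two cases and, in the ``only if'' half, proves something strictly stronger than the paper records: the identity $\varphi(a)^2=a$ for all $a$ with $\varphi\in\Aut(G)$ already forces $G$ to be abelian. That squaring argument is correct and is a genuine extra observation; it shows that the hypothesis of the converse is never satisfied for non-abelian $G$, so the paper's ``only if'' direction is in fact vacuous. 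Your mathematics therefore certainly implies the paper's intended statement (under the hypothesis both ``commutative'' and ``$G$ abelian'' are automatically true), but your closing sentence reinterpreting ``the converse holds'' as ``the converse is satisfiable/meaningful'' is not the standard reading and should be reworded: what you have actually established is that, assuming $\varphi(a^2)=a$, the quandle is commutative \emph{and} $G$ is abelian, which in particular gives the asserted equivalence.
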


\begin{proof}
If $\Alex(G, \varphi)$ is commutative, then for $a, b \in G$, we have $\varphi(a)\varphi(b)^{-1}b=\varphi(b)\varphi(a)^{-1}a$. Taking $b=1$ yields $\varphi(a^2)=a$.

Conversely, if $\varphi(a^2)=a$ for all $a \in G$, then $a*b=b*a$ if and only if $\varphi(ab)= \varphi(ba)$. This happens if and only if $G$ is abelian.
\end{proof}

If $G$ is an additive abelian group, then $\Alex(G, \varphi)$ is the usual Alexander quandle. As a consequence, we recover the following result of Bae and Choi \cite[Theorem 2.8]{Bae}.

\begin{corollary}
Let $G$ be an additive abelian group and $\varphi \in \Aut(G)$. Then the  Alexander quandle $\Alex(G, \varphi)$ is commutative if and only if $2\varphi = \id_G$.
\end{corollary}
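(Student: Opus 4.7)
The plan is to read off the corollary as a direct translation of the preceding proposition into additive notation, using the fact that $G$ is abelian. In multiplicative notation the proposition says that $\Alex(G,\varphi)$ is commutative iff $\varphi(a^2)=a$ for all $a\in G$, provided $G$ is abelian; when $G$ is written additively, the exponent $2$ becomes scalar multiplication by $2$, so $\varphi(a^2)=a$ becomes $\varphi(2a)=a$, and by $\mathbb{Z}$-linearity of $\varphi$ this rewrites as $2\varphi(a)=a$, i.e.\ $2\varphi=\id_G$.

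For the forward direction, I would suppose $\Alex(G,\varphi)$ is commutative and directly specialise the proposition: for all $a\in G$ we have $\varphi(2a)=a$, hence $2\varphi(a)=a$ for all $a$, which is the identity of endomorphisms $2\varphi=\id_G$. For the converse, I would assume $2\varphi=\id_G$, so $\varphi(2a)=2\varphi(a)=a$ for all $a\in G$; since $G$ is abelian, the ``if'' half of the proposition (the part that requires $G$ to be abelian in order to conclude commutativity from $\varphi(a^2)=a$) applies and gives commutativity of $\Alex(G,\varphi)$.

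There is no real obstacle here: the whole content is an immediate specialisation of the previous proposition, and the only step that demands attention is being explicit that, because $\varphi\in\Aut(G)$ and $G$ is abelian, $\varphi$ is additive, so $\varphi(2a)=2\varphi(a)$, which is what lets the condition $\varphi(a^2)=a$ be rephrased as the clean endomorphism identity $2\varphi=\id_G$.
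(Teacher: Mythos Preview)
Your proposal is correct and matches the paper's approach exactly: the paper states the corollary without proof, treating it as an immediate specialisation of the preceding proposition to additive abelian groups, which is precisely what you do. The only content is the translation $\varphi(a^2)=a \leftrightarrow 2\varphi(a)=a$, and you handle that cleanly.
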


An automorphism $\varphi$ of a group $G$ is called {\it central} if $a^{-1}\varphi(a) \in \Z(G)$ for all $a \in G$. Note that these are precisely the automorphisms of $G$ which induce identity on the central quotient $G/\Z(G)$. The set of all central automorphisms is a subgroup of $\Aut(G)$, and denoted by $\Autcent(G)$. Note that $\Autcent(G)=\Aut(G)$ for an abelian group $G$. However, there exist non-abelian groups $G$ for which $\Autcent(G)=\Aut(G)$. See Curran and McCaughan \cite{Curran} for detailed references. 

\begin{lemma}\label{central-auto}
Let $G$ be a group and $\varphi \in \Autcent(G)$. Then the following hold:
\begin{enumerate}
\item The map $\tilde{\varphi}:G \to \Z(G)$ defined by $\tilde{\varphi}(a)=a^{-1}\varphi(a)$ is a group homomorphism. 
\item $\varphi \mapsto \tilde{\varphi}$ gives an injective map $\Autcent(G) \to \Hom\big(G, \Z(G)\big)$.
\item If $\varphi$ is fixed-point free, then $G$ is abelian.
\end{enumerate}
\end{lemma}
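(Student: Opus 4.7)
The plan is to handle the three parts in sequence, each reducing to a short calculation that crucially exploits the defining property of a central automorphism, namely that $a^{-1}\varphi(a) \in \Z(G)$ for every $a \in G$.

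For part (1), I would verify the homomorphism property by direct computation: expand
\[
\tilde{\varphi}(ab) = (ab)^{-1}\varphi(ab) = b^{-1}\bigl(a^{-1}\varphi(a)\bigr)\varphi(b),
\]
and then use that the middle factor lies in $\Z(G)$ to commute it past $b^{-1}$, obtaining $a^{-1}\varphi(a)\cdot b^{-1}\varphi(b) = \tilde{\varphi}(a)\tilde{\varphi}(b)$. One also has to check that the image indeed sits inside $\Z(G)$, which is immediate from the definition of $\Autcent(G)$.

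For part (2), once (1) is established the map $\varphi \mapsto \tilde{\varphi}$ lands in $\Hom(G,\Z(G))$ by construction, so only injectivity remains. If $\tilde{\varphi} = \tilde{\psi}$, then $a^{-1}\varphi(a) = a^{-1}\psi(a)$ for all $a \in G$, whence $\varphi(a) = \psi(a)$ for all $a$, giving $\varphi = \psi$.

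For part (3), the fixed-point free hypothesis translates to $\ker(\tilde{\varphi}) = \{a \in G \mid \varphi(a) = a\} = \{1\}$, so $\tilde{\varphi}$ becomes an injective homomorphism from $G$ into the abelian group $\Z(G)$, forcing $G$ to be abelian. The only subtlety worth flagging is in part (1), where one must be careful to apply centrality to $a^{-1}\varphi(a)$ rather than to $\varphi(a)$ itself; after that, (2) and (3) are essentially formal consequences.
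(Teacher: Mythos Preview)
Your proposal is correct and follows essentially the same route as the paper: the same direct computation for (1) using centrality of $a^{-1}\varphi(a)$, the same cancellation argument for injectivity in (2), and the same observation in (3) that fixed-point freeness means $\tilde{\varphi}$ embeds $G$ into the abelian group $\Z(G)$. If anything, you spell out (2) more explicitly than the paper does.
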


\begin{proof}
For $a, b \in G$, we have
\begin{eqnarray*}
\tilde{\varphi}(ab) & = & (ab)^{-1}\varphi(ab)\\
&=& b^{-1}a^{-1}\varphi(a) \varphi(b)\\
&=& a^{-1}\varphi(a) b^{-1} \varphi(b),~\textrm{since}~a^{-1}\varphi(a) \in \Z(G)\\
&=& \tilde{\varphi}(a)\tilde{\varphi}(b).
\end{eqnarray*}
It is easy to see that the map $\varphi \mapsto \tilde{\varphi}$ is injective. By (1), $\tilde{\varphi}$ is a homomorphism. Further, note that $\varphi$ is fixed-point free if and only if  $\tilde{\varphi}$ is injective. Thus, $\tilde{\varphi}:G \to \Z(G)$ is an embedding and $G$ is abelian.
\end{proof}

Regarding connectivity of generalised Alexander quandles we prove the following.

\begin{theorem}\label{connnected-abelian}
Let $G$ be a group and $\varphi \in \Autcent(G)$ be an involution. If $\Alex(G, \varphi)$ is connected, then $G$ is abelian.
\end{theorem}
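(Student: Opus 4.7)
The plan is to trap the $\Inn$-orbit of the identity $1\in G$ inside $\Z(G)$; connectivity then forces $G = \Z(G)$.

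By Lemma \ref{central-auto}(1), the map $\tilde{\varphi}(a) = a^{-1}\varphi(a)$ is a homomorphism $G \to \Z(G)$, so that $\varphi(a) = a\,\tilde\varphi(a)$ with $\tilde\varphi(a)$ central. First, I would rewrite the inner map as
\begin{align*}
S_b(a) = \varphi(a)\varphi(b)^{-1} b = \varphi(a)\,\tilde\varphi(b)^{-1} b^{-1} b = \varphi(a)\,\tilde\varphi(b)^{-1},
\end{align*}
using $\varphi(b)^{-1} = \tilde\varphi(b)^{-1} b^{-1}$. Next, applying $\varphi$ to the relation $\tilde\varphi(b) = b^{-1}\varphi(b)$ and invoking $\varphi^2 = \id$ gives
\begin{align*}
\varphi\bigl(\tilde\varphi(b)\bigr) = \varphi(b)^{-1} b = \bigl(b\,\tilde\varphi(b)\bigr)^{-1} b = \tilde\varphi(b)^{-1}.
\end{align*}
Hence $\varphi$ inverts every element of the subgroup $\im(\tilde\varphi) \subseteq \Z(G)$.

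With these two facts in hand, I would prove by induction on word length in the generators $S_b$ that every $f \in \Inn(\Alex(G, \varphi))$ takes the form $f(a) = \varphi^{\epsilon}(a)\, z_f$ for some $\epsilon \in \{0,1\}$ and some $z_f \in \im(\tilde\varphi)$. The base case is the formula above. For the inductive step,
\begin{align*}
S_c\bigl(\varphi^\epsilon(a)\,z_f\bigr) = \varphi^{\epsilon+1}(a)\,\varphi(z_f)\,\tilde\varphi(c)^{-1} = \varphi^{\epsilon+1}(a)\,\bigl(z_f^{-1}\tilde\varphi(c)^{-1}\bigr),
\end{align*}
and $z_f^{-1}\tilde\varphi(c)^{-1}$ still lies in $\im(\tilde\varphi)$; inverses of $S_b$ need no separate treatment since the involution hypothesis on $\varphi$ forces $\Alex(G,\varphi)$ itself to be involutory, so $S_b^{-1} = S_b$. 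Evaluating at $a = 1$ then gives $f(1) = z_f \in \im(\tilde\varphi) \subseteq \Z(G)$, so the $\Inn$-orbit of $1$ is contained in $\Z(G)$. Connectivity forces this orbit to equal all of $G$, hence $G = \Z(G)$ is abelian.

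The main subtlety is the role of the involution hypothesis: it is precisely what makes $\varphi$ invert $\im(\tilde\varphi)$, which keeps the ``central drift'' $z_f$ inside the specific subgroup $\im(\tilde\varphi)$ of $\Z(G)$ throughout the induction. Without it, $\varphi(z_f)$ could escape $\im(\tilde\varphi)$, the normal form for elements of $\Inn$ would break down, and one could not directly conclude that the orbit of the identity is central.
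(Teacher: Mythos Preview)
Your proof is correct and follows essentially the same approach as the paper: both arguments show that the $\Inn$-orbit of $1$ is contained in $\im(\tilde\varphi)\subseteq\Z(G)$, and then invoke connectivity. The paper carries this out by expanding $S_{g_k}\cdots S_{g_1}(1)$ explicitly and repeatedly rearranging using $a^{-1}\varphi(a)\in\Z(G)$ until the expression collapses to $\varphi(a)a^{-1}$ for a suitable $a$; you instead package the same computation as an induction, first isolating the key identity $\varphi(\tilde\varphi(b))=\tilde\varphi(b)^{-1}$ and then establishing the normal form $f(a)=\varphi^{\epsilon}(a)\,z_f$ with $z_f\in\im(\tilde\varphi)$ for every $f\in\Inn\bigl(\Alex(G,\varphi)\bigr)$. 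Your organization is a little cleaner and yields the slightly stronger statement that every inner automorphism is of this explicit shape, but the mathematical content is the same.
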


\begin{proof}
Let $\varphi^2= \id_G$. Then for all $g, a \in G$, we have
$$S_g  S_g (a)=S_g \big (\varphi(a) \varphi(g^{-1}) g\big)=\varphi^2(a) \varphi^2(g^{-1}) \varphi(g) \varphi(g^{-1}) g=a.$$
Hence $\Alex(G, \varphi)$ is involutory, and $S_g=S_g^{-1}$ for all $g \in G$. Let $\Alex(G, \varphi)$ be connected and  $b \in G$. Then there exists a $f \in \Inn(G)$ such that $b^{-1}= f(1)$, where $f= S_{g_k}  \cdots  S_{g_1}$ for some $g_i \in G$. Then

\begin{eqnarray*}
b^{-1} & = & S_{g_k}  \cdots   S_{g_1}(1)\\
&=& (\cdots ((1 * g_1) * g_2) *\cdots ) *g_k\\
&=& (\cdots(\varphi(g_1^{-1}) g_1)* g_2) *\cdots ) *g_k\\
&=& \varphi^k(g_1^{-1}) \varphi^{k-1}(g_1) \varphi^{k-1}(g_2^{-1}) \varphi^{k-2}(g_2) \varphi^{k-2}(g_3^{-1}) \cdots \varphi^2(g_{k-1}^{-1}) \varphi(g_{k-1}) \varphi(g_k^{-1}) g_k\\
&=& \varphi^k(g_1^{-1}) \varphi^{k-1}(g_1) \varphi^{k-1}(g_2^{-1}) \varphi^{k-2}(g_2) \varphi^{k-2}(g_3^{-1}) \cdots \varphi^2(g_{k-1}^{-1})   \varphi(g_k^{-1}) g_k \varphi(g_{k-1}),\\
&& \textrm{since}~a^{-1}\varphi(a) \in \Z(G)~\textrm{for all}~ a \in G\\
&=& \varphi^k(g_1^{-1}) \varphi^{k-1}(g_1) \varphi^{k-1}(g_2^{-1}) \varphi^{k-2}(g_2) \varphi^{k-2}(g_3^{-1}) \cdots \varphi \big(\varphi(g_{k-1}^{-1})  g_k^{-1} \big) \big(\varphi(g_{k-1}^{-1}) g_k^{-1} \big)^{-1}.
\end{eqnarray*}

By repeatedly using the above property of $\varphi$, we get 
$$b^{-1} =\varphi(a)a^{-1},~ \textrm{where}~ a=\varphi^{k-1}(g_1^{-1})\varphi^{k-2}(g_2^{-1}) \cdots \varphi(g_{k-1}^{-1})g_k^{-1}.$$
 This gives $b= a \varphi(a^{-1})=\tilde{\varphi}(a^{-1}) \in \Z(G)$. This proves $G=\Z(G)$.
\end{proof}

In view of \cite[Proposition 3.3]{Singh}, under the hypothesis of the preceding theorem, we have $\Alex(G, \varphi)$ is connected if and only if $G$ is abelian and $\tilde{\varphi}$ is surjective. We need the following well-known characterisation of finite connected Alexander quandles. See, for example, \cite[Theorem 2.5]{Bae} or \cite[Corollary 7.2]{Hulpke}.

\begin{theorem}\label{Bae-Choe-theorem}
Let $G$ be a finite additive abelian group and $\varphi \in \Aut(G)$. Then the following are equivalent:
\begin{enumerate}
\item  $\Alex(G, \varphi)$ is connected.
\item $\varphi$ is fixed-point free.
\item $\tilde{\varphi} \in \Aut(G)$.
\end{enumerate}
\end{theorem}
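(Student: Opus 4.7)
The plan is to derive the three equivalences by analyzing the orbit of the identity element under $\Inn\big(\Alex(G, \varphi)\big)$ using the explicit form of the Alexander operation. Writing the quandle product additively as $a * b = \varphi(a) + (\id_G - \varphi)(b)$, one has the homomorphism $\tilde{\varphi}: G \to G$ given by $\tilde{\varphi}(a) = \varphi(a) - a$ (noting that $\Autcent(G) = \Aut(G)$ for abelian $G$, so Lemma \ref{central-auto} applies), with $\ker(\tilde{\varphi}) = \Fix(\varphi)$ directly from the definition.

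First I would dispose of (2) $\Leftrightarrow$ (3): since $G$ is finite, the endomorphism $\tilde{\varphi}$ is injective iff it is bijective, and it is injective iff $\varphi$ is fixed-point free.

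For the remaining implications I would compute $\Inn\big(\Alex(G, \varphi)\big)$ explicitly. A direct calculation gives $S_g(a) = \varphi(a) - \tilde{\varphi}(g)$, and since $\varphi \circ \tilde{\varphi} = \tilde{\varphi} \circ \varphi$, the subgroup $\im(\tilde{\varphi})$ is stable under both $\varphi$ and $\varphi^{-1}$. An induction on word length in the generators $S_g^{\pm 1}$ then yields the normal form $f(a) = \varphi^{m}(a) + c_f$ for every $f \in \Inn\big(\Alex(G, \varphi)\big)$, with $m \in \mathbb{Z}$ and $c_f \in \im(\tilde{\varphi})$.

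With this normal form in hand, both remaining directions are immediate. For (3) $\Rightarrow$ (1), surjectivity of $\tilde{\varphi}$ lets one solve $\tilde{\varphi}(g) = \varphi(a) - b$ for any prescribed $a, b \in G$, so $S_g(a) = b$, and $\Inn\big(\Alex(G, \varphi)\big)$ acts transitively. For (1) $\Rightarrow$ (3), the normal form shows $f(0) = c_f \in \im(\tilde{\varphi})$ for every $f \in \Inn\big(\Alex(G, \varphi)\big)$, so connectivity forces the orbit of $0$ to equal $G$ and hence $\im(\tilde{\varphi}) = G$. The only step that requires any real care is tracking the $\varphi$-invariance of $\im(\tilde{\varphi})$ through the inductive description of $\Inn\big(\Alex(G, \varphi)\big)$ and through the formula for the inverses $S_g^{-1}(a) = \varphi^{-1}(a) + \varphi^{-1}\tilde{\varphi}(g)$; beyond that, the proof is essentially bookkeeping.
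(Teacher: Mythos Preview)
Your argument is correct. Note, however, that the paper does not actually prove this theorem: it is stated as a ``well-known characterisation'' and attributed to \cite[Theorem~2.5]{Bae} and \cite[Corollary~7.2]{Hulpke}, with no proof given in the text. So there is no in-paper proof to compare against.

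That said, your proof is self-contained and sound. The equivalence (2)~$\Leftrightarrow$~(3) is immediate from finiteness and the identification $\ker(\tilde{\varphi})=\Fix(\varphi)$. Your key computation, that every element of $\Inn\big(\Alex(G,\varphi)\big)$ has the form $a\mapsto \varphi^{m}(a)+c$ with $c\in\im(\tilde{\varphi})$, is correct: the base case $S_g(a)=\varphi(a)-\tilde{\varphi}(g)$ and $S_g^{-1}(a)=\varphi^{-1}(a)+\varphi^{-1}\tilde{\varphi}(g)$ are as you wrote, and the inductive step goes through because $\varphi$ and $\tilde{\varphi}$ commute, so $\im(\tilde{\varphi})$ is stable under $\varphi^{\pm 1}$. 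From this normal form both (3)~$\Rightarrow$~(1) and (1)~$\Rightarrow$~(3) follow exactly as you indicate. The only cosmetic remark is that invoking Lemma~\ref{central-auto} to justify that $\tilde{\varphi}$ is a homomorphism is overkill in the abelian setting, where $\tilde{\varphi}=\varphi-\id_G$ is visibly an endomorphism.
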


An immediate consequence of the preceding theorem is that; the dihedral quandle $\R_n$ is connected if and only if $n$ is odd.

\begin{remark}\label{purely-non-ab-def}
A group is said to be {\it purely non-abelian} if it does not have any non-trivial abelian direct factor. Adney and Yen \cite[Theorem 1]{Adney} proved that; if $G$ is a finite purely non-abelian group, then the map $\varphi \mapsto \tilde{\varphi}$ is a bijection. Thus, for a finite purely non-abelian group $G$, in view of Theorem \ref{connnected-abelian}, the set $\Hom\big(G, \Z(G)\big)=\Hom\big(G/G', \Z(G)\big)$ gives central automorphisms for which the Alexander quandle structures on $G$ are not connected. For example, if $Q_8$ is the Quaternion group of order 8, then $|\Autcent(Q_8)|=|\Hom\big(Q_8, \mathbb{Z}/2\mathbb{Z}\big)|=|\Hom\big(\mathbb{Z}/2\mathbb{Z} \oplus \mathbb{Z}/2\mathbb{Z}, \mathbb{Z}/2\mathbb{Z}\big)|=4$.
\end{remark}
\bigskip

\section{Doubly transitive action}\label{section6}
In this section, we generalise a recent result of Ferman, Nowik and Teicher \cite{Ferman} about doubly transitive action of the automorphism group of a finite Alexander quandle of prime order on the underlying quandle. An alternate proof of the same result was given by Watanabe \cite{Watanabe}. Our approach is more general than \cite{Ferman} and \cite{Watanabe}.  Let 
$$\Aut\big(\Alex(G, \varphi)\big)_0=\big\{f \in  \Aut\big(\Alex(G, \varphi)\big)~|~ f(0)=0 \big\}$$
 be the isotropy subgroup of $\Aut\big(\Alex(G, \varphi)\big)$ at $0$. We first prove a stronger version of Theorem \ref{main-theorem} for finite abelian groups with a fixed-point free automorphism.

\begin{theorem}\label{fixed-point-free-theorem}
Let $G$ be a finite additive abelian group and $\varphi \in \Aut(G)$ be a fixed-point free automorphism. Then the following hold:
\begin{enumerate}
\item $C_{\Aut(G)}(\varphi)= \Aut\big(\Alex(G, \varphi)\big)_0$.
\item $ \Aut\big(\Alex(G, \varphi)\big) \cong G \rtimes C_{\Aut(G)}(\varphi)$.
\item $ \Inn\big(\Alex(G, \varphi)\big) \cong G \rtimes \langle \varphi \rangle$.
\end{enumerate}
\end{theorem}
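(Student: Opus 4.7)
My plan is to prove the three assertions in sequence, using Proposition \ref{joyce-embedding} (which, since $G$ is abelian, gives an embedding $\Phi:G\rtimes C_{\Aut(G)}(\varphi)\hookrightarrow\Aut(\Alex(G,\varphi))$ via $\Phi(a,f)=t_a\circ f$, where $t_a(b)=b+a$) together with Theorem \ref{Bae-Choe-theorem}, which (since $\varphi$ is fixed-point free on a finite abelian group) tells us that $\tilde\varphi=\varphi-\id_G$ is itself an automorphism of $G$.

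For part (1), the inclusion $C_{\Aut(G)}(\varphi)\subseteq \Aut(\Alex(G,\varphi))_0$ is immediate since any group automorphism fixes $0$ and commutativity with $\varphi$ was already shown in Proposition \ref{joyce-embedding}. For the reverse inclusion, take $f\in\Aut(\Alex(G,\varphi))_0$ and rewrite the quandle operation as $a*b=\varphi(a)+(\id_G-\varphi)(b)$. The identity $f(a*b)=f(a)*f(b)$ becomes
\[
f\bigl(\varphi(a)+(\id_G-\varphi)(b)\bigr)=\varphi(f(a))+(\id_G-\varphi)(f(b)).
\]
Setting $a=0$ and using $f(0)=0$ yields $f\circ(\id_G-\varphi)=(\id_G-\varphi)\circ f$, while setting $b=0$ yields $f\circ\varphi=\varphi\circ f$. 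Substituting both back gives $f(\varphi(a)+(\id_G-\varphi)(b))=f(\varphi(a))+f((\id_G-\varphi)(b))$, and since $\varphi$ and $\id_G-\varphi$ are bijections on $G$, the elements $\varphi(a)$ and $(\id_G-\varphi)(b)$ independently range over all of $G$. Hence $f$ is additive, so $f\in\Aut(G)$, and combining with $f\circ\varphi=\varphi\circ f$ gives $f\in C_{\Aut(G)}(\varphi)$.

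For part (2), only surjectivity of $\Phi$ remains. Given any $f\in\Aut(\Alex(G,\varphi))$, set $c=f(0)$; then $t_{-c}\circ f$ fixes $0$ and still lies in $\Aut(\Alex(G,\varphi))$, so by (1) it lies in $C_{\Aut(G)}(\varphi)$. Writing $g=t_{-c}\circ f$, we obtain $f=t_c\circ g=\Phi(c,g)$, which proves surjectivity and hence (2).

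For part (3), observe that $S_a(b)=\varphi(b)+(\id_G-\varphi)(a)=(t_{(\id_G-\varphi)(a)}\circ\varphi)(b)$, so under $\Phi$ we have $S_a=\Phi\bigl((\id_G-\varphi)(a),\varphi\bigr)$. Since $\id_G-\varphi$ is a bijection on $G$, as $a$ ranges over $G$ the first coordinate ranges over all of $G$. Thus $\Inn(\Alex(G,\varphi))$ corresponds under $\Phi$ to the subgroup of $G\rtimes C_{\Aut(G)}(\varphi)$ generated by $\{(c,\varphi):c\in G\}$. A direct product computation shows $(c,\varphi)(c',\varphi)^{-1}=(c-c',\id_G)$, so this subgroup contains $G\rtimes\{\id_G\}$, and together with any $(c,\varphi)$ it generates precisely $G\rtimes\langle\varphi\rangle$, yielding the claimed isomorphism. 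The main conceptual step is part (1); the key trick there is recognising that the two substitutions $a=0$ and $b=0$ together force additivity, and this succeeds precisely because $\id_G-\varphi$ is invertible, which is exactly the content of the fixed-point free hypothesis via Theorem \ref{Bae-Choe-theorem}.
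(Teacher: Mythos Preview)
Your proof is correct and follows essentially the same approach as the paper: for (1) you substitute $a=0$ and $b=0$ into the quandle-homomorphism identity to force additivity via the surjectivity of $\id_G-\varphi$ (the paper's $\tilde\varphi$), and (2) and (3) are handled by the same translation trick and the identification $S_a=t_{(\id_G-\varphi)(a)}\circ\varphi$. Your argument for (3) is in fact slightly more explicit than the paper's, since you verify via the computation $(c,\varphi)(c',\varphi)^{-1}=(c-c',\id_G)$ that the subgroup generated by $\{(c,\varphi):c\in G\}$ is all of $G\rtimes\langle\varphi\rangle$, whereas the paper only checks that the generating sets $\{S_a\}$ and $\{t_c\varphi\}$ coincide.
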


\begin{proof}
Let $f \in \Aut\big(\Alex(G, \varphi)\big)_0$. Then for $a, b \in G$, we have
$$f\big(\varphi(a)-\varphi(b)+b \big)=\varphi\big(f(a)\big)-\varphi\big(f(b)\big)+f(b).$$
Taking $a=0$ gives $f\big(-\varphi(b)+b \big)=-\varphi\big(f(b)\big)+f(b)$ and taking $b=0$ gives $f\big(\varphi(a)\big)=\varphi\big(f(a)\big)$. Putting in the above equation gives
$$f\big(\varphi(a)-\varphi(b)+b \big)=f\big(\varphi(a)\big)+f\big(-\varphi(b)+b \big).$$
Since $\varphi$ is fixed-point free, by Theorem \ref{Bae-Choe-theorem}, $\tilde{\varphi} \in \Aut(G)$. Hence $f(x+y)=f(x)+f(y)$ for all $x, y \in G$ and $f \in C_{\Aut(G)}(\varphi)$. Conversely, if $f \in C_{\Aut(G)}(\varphi)$, then $f(0)=0$ and $f(a*b)=f(a)*f(b)$, and the proof of (1) is complete.

By Proposition \ref{joyce-embedding}, there is an embedding $\Phi:G \rtimes C_{\Aut(G)}(\varphi)  \hookrightarrow \Aut\big(\Alex(G, \varphi)\big)$. Let $f \in \Aut\big(\Alex(G, \varphi)\big)$. Define $h: G \to G$ by $h(a)=f(a)-f(0)$ for all $a \in G$. Then $h \in \Aut\big(\Alex(G, \varphi)\big)$ and $h(0)=0$. Hence $h \in C_{\Aut(G)}(\varphi)$ and $f= t_{f(0)} h \in \Phi\big(G \rtimes C_{\Aut(G)}(\varphi) \big)$. This proves the assertion (2).

Since $\varphi$ is fixed-point free, by Theorem \ref{Bae-Choe-theorem}, $G= \im(\tilde{\varphi})= \{- \varphi(a)+a~|~a \in G \}$. For each $a \in G$, $S_a=t_{- \varphi(a)+a} \varphi \in \Phi(G \rtimes \langle \varphi \rangle)$. Conversely, each $a \in G$ can be written as $a=- \varphi(a')+a'$ for some $a' \in G$. Then we have $t_a \varphi=S_{a'}$, and the proof of assertion (3) is complete.
\end{proof}

The result below is well-known and easy to prove.

\begin{lemma}\label{doubly-transitive-criteria}
Let $G$ be a group acting on a set $X$ and $x \in X$. Then the action is doubly transitive if and only if it is transitive and the isotropy subgroup $G_x$ acts transitively on $X\setminus \{x\}$.
\end{lemma}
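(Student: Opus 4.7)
The plan is to prove the two directions of the equivalence separately, both by direct arguments from the definitions. For the forward implication, I would assume that $G$ acts doubly transitively on $X$ and immediately note that transitivity follows by restricting attention to the first coordinates of pairs. For the action of $G_x$ on $X \setminus \{x\}$, I would pick arbitrary $y, z \in X \setminus \{x\}$, apply double transitivity to the distinct pairs $(x, y)$ and $(x, z)$ to obtain $g \in G$ with $gx = x$ and $gy = z$, and observe that the first equation says $g \in G_x$.

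For the backward implication, the task is to build a single element of $G$ that simultaneously sends one ordered pair of distinct points to another. Given pairs $(x_1, x_2)$ and $(y_1, y_2)$ with $x_1 \neq x_2$ and $y_1 \neq y_2$, I would first use transitivity of $G$ on $X$ to choose $g_1, g_2 \in G$ with $g_1 x = x_1$ and $g_2 x = y_1$. Setting $u = g_1^{-1} x_2$ and $v = g_2^{-1} y_2$, both points lie in $X \setminus \{x\}$ since $x_2 \neq x_1 = g_1 x$ and $y_2 \neq y_1 = g_2 x$, and group elements act as bijections. Transitivity of $G_x$ on $X \setminus \{x\}$ then yields $h \in G_x$ with $hu = v$, and the composition $g = g_2 h g_1^{-1}$ satisfies $g x_1 = g_2 h x = g_2 x = y_1$ and $g x_2 = g_2 h u = g_2 v = y_2$, as required.

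There is no serious obstacle in this argument: the only point requiring a moment of care is confirming that $u$ and $v$ really lie in $X \setminus \{x\}$, and this is immediate from the injectivity of the $G$-action together with the assumption that the given pairs consist of distinct elements. Since the paper labels the lemma as well-known, I would keep the write-up brief, essentially only spelling out the construction of $g$ in the backward direction.
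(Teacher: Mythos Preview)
Your argument is correct and is the standard textbook proof of this well-known fact. Note, however, that the paper does not actually prove this lemma at all: it simply states the result as ``well-known and easy to prove'' and moves on, so there is nothing to compare your approach against.
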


The following is probably well-known, and we provide a proof here for the sake of completeness.

\begin{lemma}\label{transitive-criteria}
Let $G$ be a non-trivial finite group. Then $\Aut(G)$ acts transitively on $G \setminus \{ 0\}$ if and only if $G \cong (\mathbb{Z}/p \mathbb{Z})^n$ for some prime $p$ and integer $n \geq 1$.
\end{lemma}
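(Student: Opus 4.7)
The plan is to reduce the problem to showing that a finite group in which $\Aut(G)$ is transitive on non-identity elements must be an elementary abelian $p$-group, then invoke the well-known identification $\Aut\big((\mathbb{Z}/p\mathbb{Z})^n\big) \cong \GL(n, \mathbb{F}_p)$ for the converse. The backward direction is straightforward: $\GL(n, \mathbb{F}_p)$ acts transitively (indeed $n$-transitively in a basis sense) on $(\mathbb{Z}/p\mathbb{Z})^n \setminus \{0\}$, since any nonzero vector can be extended to an ordered basis.

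For the forward direction, I would first observe that automorphisms preserve the order of elements, so transitivity of $\Aut(G)$ on $G \setminus \{0\}$ forces every non-identity element to have the same order $m$. Since $G$ is finite and non-trivial, $m > 1$; if $m$ were composite with a prime divisor $q < m$, then any element would have a non-identity power of order $q \ne m$, a contradiction. Hence $m = p$ is prime, so $G$ has exponent $p$ and, by Cauchy's theorem, $G$ is a finite $p$-group.

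The main obstacle is showing that $G$ is abelian; for $p = 2$ this is automatic because any group of exponent $2$ is abelian, but for odd $p$ a $p$-group of exponent $p$ need not be abelian in general (e.g.\ the Heisenberg group over $\mathbb{F}_p$). Here I would exploit the full $\Aut(G)$-transitivity: the center $\Z(G)$ is a characteristic subgroup, hence is preserved setwise by every $f \in \Aut(G)$. Since $G$ is a non-trivial finite $p$-group, $\Z(G)$ is non-trivial. If $\Z(G)$ were a proper subgroup, pick $z \in \Z(G) \setminus \{0\}$ and $g \in G \setminus \Z(G)$; by transitivity there exists $f \in \Aut(G)$ with $f(z) = g$, which would place $g$ inside $f\big(\Z(G)\big) = \Z(G)$, a contradiction. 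Therefore $\Z(G) = G$, so $G$ is an abelian group of exponent $p$, and the structure theorem for finite abelian groups (or a direct $\mathbb{F}_p$-vector space argument) yields $G \cong (\mathbb{Z}/p\mathbb{Z})^n$ for some $n \geq 1$, completing the proof.
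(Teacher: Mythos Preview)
Your proof is correct and follows essentially the same route as the paper: show all non-identity elements share a common prime order $p$ so that $G$ is a $p$-group, then use that $\Z(G)$ is a non-trivial characteristic subgroup together with the transitivity hypothesis to force $\Z(G)=G$, yielding $G\cong(\mathbb{Z}/p\mathbb{Z})^n$. The only cosmetic difference is that you argue directly that the common order must be prime via powers, whereas the paper invokes Cauchy's theorem up front, but the logical content is the same.
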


\begin{proof}
Suppose that $\Aut(G)$ acts transitively on $G \setminus \{ 0\}$. For each prime $p$ dividing order of $G$, there exists an element $a \in G$ of order $p$ by Cauchy's Theorem for finite groups. Given any $b \in G \setminus \{ 0\}$, there exists $\varphi \in \Aut(G)$ such that $\varphi(a)=b$. This implies that all elements of $G \setminus \{ 0\}$ have the same order $p$. Since $G$ is a non-trivial $p$-group, its center $\Z(G)$ is non-trivial. For each $a \in G \setminus \{ 0\}$ and $b \in \Z(G) \setminus \{ 0\}$, there exists $\varphi \in \Aut(G)$ such that $\varphi(b)=a$. Since  $\Z(G)$ is invariant under each automorphism of $G$, we have $a\in \Z(G) \setminus \{ 0\}$. Hence $G\cong  (\mathbb{Z}/p \mathbb{Z})^n$ for some integer $n \geq 1$. The converse is obvious, and hence omitted.
\end{proof}

We can now prove our final result, which generalises \cite[Theorem 3.9]{Ferman} to elementary abelian $p$-groups.

\begin{theorem}\label{FNT-generalisation}
Let $G=  (\mathbb{Z}/p \mathbb{Z})^n$ for some prime $p$ and integer $n \geq 1$. If $\varphi$ is multiplication by a non-trivial unit of $\mathbb{Z}/p \mathbb{Z}$, then $\Aut\big(\Alex(G, \varphi)\big)$ acts doubly transitively on $\Alex(G, \varphi)$.
\end{theorem}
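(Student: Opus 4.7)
The plan is to combine Lemma \ref{doubly-transitive-criteria}, the structural result Theorem \ref{fixed-point-free-theorem}, and Lemma \ref{transitive-criteria} — essentially all the heavy lifting is already done, and the theorem reduces to putting the pieces together.

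First I would verify that $\varphi$ is fixed-point free: if $\varphi(a) = ua = a$ with $u \in (\mathbb{Z}/p\mathbb{Z})^\times$ non-trivial, then $(u-1)a = 0$, and since $u - 1$ is a unit of $\mathbb{Z}/p\mathbb{Z}$, we conclude $a = 0$. This makes Theorem \ref{fixed-point-free-theorem} applicable, so $\Aut\bigl(\Alex(G,\varphi)\bigr) \cong G \rtimes C_{\Aut(G)}(\varphi)$, and the translation subgroup $\{t_a \mid a \in G\}$ acts transitively on $\Alex(G,\varphi)$. By part (1) of that theorem, the isotropy subgroup at $0$ is exactly $C_{\Aut(G)}(\varphi)$.

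Next I would identify $C_{\Aut(G)}(\varphi)$ explicitly. Since $G = (\mathbb{Z}/p\mathbb{Z})^n$, we have $\Aut(G) = \GL(n, \mathbb{Z}/p\mathbb{Z})$, and $\varphi$ corresponds to the scalar matrix $u \cdot I$. Scalar matrices lie in the center of $\GL(n, \mathbb{Z}/p\mathbb{Z})$, so every element of $\Aut(G)$ commutes with $\varphi$, giving $C_{\Aut(G)}(\varphi) = \Aut(G)$. By Lemma \ref{transitive-criteria}, $\Aut(G)$ acts transitively on $G \setminus \{0\}$.

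Finally, applying Lemma \ref{doubly-transitive-criteria} with $X = \Alex(G,\varphi)$ and the distinguished point $x = 0$: transitivity of the full action follows from translations, and the isotropy $C_{\Aut(G)}(\varphi) = \Aut(G)$ acts transitively on $X \setminus \{0\} = G \setminus \{0\}$. Hence the action is doubly transitive. There is no real obstacle here; the only subtlety would be if $\varphi$ failed to be fixed-point free or failed to be central in $\Aut(G)$, but both are immediate from the hypotheses that $u$ is a non-trivial unit and that $\varphi$ is scalar multiplication.
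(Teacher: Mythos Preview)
Your proof is correct and follows essentially the same route as the paper: verify $\varphi$ is fixed-point free, use Theorem \ref{fixed-point-free-theorem} to identify the isotropy subgroup at $0$ as $C_{\Aut(G)}(\varphi)=\Aut(G)$, invoke Lemma \ref{transitive-criteria} for transitivity on $G\setminus\{0\}$, and conclude via Lemma \ref{doubly-transitive-criteria}. The only cosmetic difference is that the paper obtains the initial transitivity from connectedness of $\Alex(G,\varphi)$ (via Theorem \ref{Bae-Choe-theorem} and $\Inn$), whereas you obtain it directly from the translation subgroup; both are immediate.
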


\begin{proof}
Since $\varphi$ is fixed-point free, by Theorem \ref{Bae-Choe-theorem}, $\Alex(G, \varphi)$ is connected. This implies that $\Inn\big(\Alex(G, \varphi)\big)$, and hence $\Aut\big(\Alex(G, \varphi)\big)$ acts transitively on $\Alex(G, \varphi)$. In view of Lemma \ref{doubly-transitive-criteria}, it suffices to prove that $\Aut\big(\Alex(G, \varphi)\big)_0$ acts transitively on $\Alex(G, \varphi) \setminus \{ 0 \}$. By Theorem \ref{fixed-point-free-theorem},  $\Aut\big(\Alex(G, \varphi)\big)_0=C_{\Aut(G)}(\varphi)$. But, $C_{\Aut(G)}(\varphi)= \Aut(G)$, which by Lemma \ref{transitive-criteria} acts transitively on $\Alex(G, \varphi) \setminus \{ 0 \}$.
\end{proof}

\begin{remark}
It is worth pointing out that $\Alex(G, \varphi)$ is not two-point homogeneous for $G =  (\mathbb{Z}/p \mathbb{Z})^n$ with $n \geq 2$. This follows from recent results of Vendramin \cite[Theorem 3]{Vendramin} and Wada \cite[Corollary 4.5]{Wada}, where they classify all finite two-point homogeneous quandles. In fact, they proved that any such quandle is isomorphic to an Alexander quandle defined by primitive roots over a finite field.
\end{remark}

\begin{remark}
McCarron \cite[Proposition 5]{McCarron} proved that; if $k \geq 2$ and $X$ is a finite $k$-transitive quandle with at least four elements, then $k=2$. Further, the dihedral quandle with three elements $R_3$ is the unique 3-transitive quandle. Thus, higher order transitivity does not exist in quandles with at least four elements.
\end{remark}

In view of the above remark, the following problem seems natural.

\begin{problem}
For an integer $k \geq 2$, classify all finite quandles $X$ for which $\Aut(X)$ acts $k$-transitively on $X$.
\end{problem}

\medskip \noindent \textbf{Acknowledgement.} The authors thank the referee for many useful comments. The work was supported by the DST-RSF Project INT/RUS/RSF/P-2 (Grant of Russian
Science Foundation No.16-41-02006).
\bigskip


\begin{thebibliography}{HD}
\bibitem{Adney} Adney, J.E., Yen, T.: \textit{Automorphisms of $p$-groups}. Illinois J. Math. 9, 137--143  (1965).
\bibitem{Bardakov} Bardakov, V.G., Nasybullov, T.R., Neshchadim, M.V.: \textit{Twisted conjugacy classes of the unit element}. (Russian) Sibirsk. Mat. Zh. 54, 20--34 (2013); translation in Sib. Math. J. 54, 10--21 (2013).
\bibitem{Bae} Bae, Y., Choi, S.: \textit{On properties of commutative Alexander quandles}. J. Knot Theory Ramifications 23, 1460013, 8 pp (2014).
\bibitem{Carter} Carter, J.S.: \textit{A survey of quandle ideas}. Introductory lectures on knot theory, 22--53, Ser. Knots Everything, 46, World Sci. Publ., Hackensack, NJ (2012).
\bibitem{Clark1} Clark, W.E., Elhamdadi, M., Saito, M., Yeatman, T.: \textit{Quandle colorings of knots and applications}. J. Knot Theory Ramifications 23, 1450035, 29 pp (2014).
\bibitem{Clark} Clark, W.E., Saito, M.: \textit{Algebraic properties of quandle extensions and values of cocycle knot invariants}. J. Knot Theory Ramifications. http://dx.doi.org/10.1142/S0218216516500802.
\bibitem{Curran} Curran, M.J., McCaughan, D.J.: \textit{Central automorphisms that are almost inner}. Comm. Algebra 29,  2081--2087 (2001).
\bibitem{Elhamdadi} Elhamdadi, M., Macquarrie, J.,  Restrepo, R.: \textit{Automorphism groups of quandles}. J. Algebra Appl. 11, 1250008, 9 pp  (2012).
\bibitem{Ferman} Ferman, A., Nowik, T., Teicher, M.: \textit{On the structure and automorphism group of finite Alexander quandles}. J. Knot Theory Ramifications 20, 463--468  (2011).
\bibitem{Fox} Fox, R.H.: \textit{A quick trip through knot theory}. In: Topology of 3 -Manifolds, Prentice-Hall, 120--167 (1962).
\bibitem{Hou} Hou, Xiang-dong:  \textit{Automorphism groups of Alexander quandles}. J. Algebra 344, 373--385 (2011).
\bibitem{Hulpke} Hulpke, A., Stanovsk\'y,  D., Vojt\v{e}chovsk\'y, P.: \textit{Connected quandles and transitive groups}. J. Pure Appl. Algebra 220, 735--758 (2016).
\bibitem{Joyce} Joyce, D.:  \textit{A classifying invariant of knots, the knot quandle}. J. Pure Appl. Algebra 23, 37--65 (1982).
\bibitem{Kamada} Kamada, S.: \textit{Knot invariants derived from quandles and racks}. Invariants of knots and 3-manifolds (Kyoto, 2001), 103--117 (electronic), Geom. Topol. Monogr., 4, Geom. Topol. Publ., Coventry, (2002).
\bibitem{Matveev} Matveev, S.: \textit{Distributive groupoids in knot theory}. (Russian) Mat. Sb. (N.S.) 119 (161), 78--88, 160 (1982).
\bibitem{McCarron} McCarron, J.: \textit{Small homogeneous quandles}. ISSAC 2012-Proceedings of the 37th International Symposium on Symbolic and Algebraic Computation, 257--264, ACM, New York, (2012).
\bibitem{Nelson} Nelson, S.: \textit{The combinatorial revolution in knot theory}. Notices Amer. Math. Soc. 58, 1553--1561 (2011).
\bibitem{Neumann} Neumann, B.H.: \textit{Commutative Quandles}. Lecture Notes in Mathematics, Vol. 1098 (Springer, Berlin), pp. 81--86 (1984).
\bibitem{Przytycki} Przytycki, J.H.: \textit{3-coloring and other elementary invariants of knots}. In: Banach Center Publications 42 Knot theory, 275--295 (1998).
\bibitem{Singh} Singh, M.: \textit{Classification of flat connected quandles}. J. Knot Theory Ramifications 25, 1650071, 8 pp (2016).
\bibitem{Takasaki} Takasaki, M.: \textit{Abstraction of symmetric transformation}. Tohoku Math. J. 49, 145--207 (1942).
\bibitem{Vendramin} Vendramin, L.: \textit{Doubly transitive groups and cyclic quandles}.  J. Math. Soc. Japan, to appear.
\bibitem{Wada} Wada, K.: \textit{Two-point homogeneous quandles with cardinality of prime power}. Hiroshima Math. J. 45, 165--174 (2015).
\bibitem{Watanabe} Watanabe, T.: \textit{An alternative proof of doubly transitive property of a connected quandle of a prime order}. J. Knot Theory Ramifications 24, 1520001, 3 pp (2015).
\end{thebibliography}
\end{document}